\newtheorem{proposition}{Proposition}
\newtheorem{theorem}{Theorem}
\newtheorem*{corollary}{Corollary}
\newtheorem*{remark}{Remark}
\newcommand{\K}{\mathcal{K}}
\title{Nonlinearity helps the convergence of the Inverse Born Series}
\author{Nicholas DeFilippis}
\address{Courant Institute of Mathematical Sciences}
\email{nad9961@nyu.edu}
\author{Shari Moskow}
\address{Department of Mathematics, Drexel University, Philadelphia, PA, USA }
\email{moskow@math.drexel.edu}
\author{John C. Schotland }
\address{Department of Mathematics and Department of Physics, Yale University, New Haven, CT, USA}
\email{john.schotland@yale.edu}
\date{}
\begin{document}

\maketitle
\begin{abstract} 
In previous work of the authors, we investigated the Born and inverse Born series for a scalar wave equation with linear and nonlinear terms, the nonlinearity being cubic of Kerr type~\cite{DeMoSc}. We reported conditions which guarantee convergence of the inverse Born series, enabling recovery of the coefficients of the linear and nonlinear terms. In this work, we show that if the coefficient of the linear term is known, an arbitrarily strong Kerr nonlinearity can be reconstructed, for sufficiently small data. Additionally, we show that similar convergence results hold for general polynomial nonlinearities. Our results are illustrated with numerical examples. 
\end{abstract}

\section{Introduction}
There has been considerable recent interest in inverse problems
for nonlinear partial differential equations (PDEs)~\cite{assylbekov_1,assylbekov_2,carstea,imanuvilov,isakov_1,isakov_2,isakov_3,lassas,isakov_4,KrUh,kang,kurylev,lassas}.  Applications of such problems arise in a variety of contexts, including optical imaging and seismology. Similar to the case of linear PDEs, the goal is to recover an unknown spatially varying coefficient from boundary measurements. The above referenced works have demonstrated that nonlinearity is of great utility in proving uniqueness of the solution to the inverse problem for a large class of nonlinear PDEs. We note that reconstruction methods have also been developed~\cite{barreto,carstea,griesmaier,kaltenbacher,lassas_2023,DeMoSc}. 
In this paper, we show that nonlinearity is also helpful for reconstruction, in the sense that in certain cases, an arbitrarily strong nonlinearity can be recovered for sufficiently small scattering data.

In previous work \cite{DeMoSc}, we considered the inverse problem of recovering the coefficients of a nonlinear elliptic PDE arising in the study of the Kerr effect. The Kerr effect is a nonlinear optical process that leads to focusing or defocusing of light~\cite{boyd}. In ~\cite{DeMoSc} the unknowns to be reconstructed are the coefficients of both a linear term and a cubic term in the PDE. To this end, we constructed the Born series and found a recursive formula for the forward operators arising in the series.  We also obtained bounds on the forward operators and gave conditions which guarantee convergence of the inverse born series (IBS). The IBS was then used to reconstruct both coefficients from boundary measurements. Although the IBS has been extensively applied to inverse problems for linear PDEs~\cite{review}, Ref.~\cite{DeMoSc} was the first report of its use for a nonlinear PDE. 

{\color{black} In this paper, we consider a variant of the above the inverse problem in which the coefficient of the linear term is known a priori. 
Surprisingly, we find that not reconstructing the linear term leads to several advantages. First, it is possible to find explicit bounds on the forward operators in the Born series. In contrast, in our previous work \cite{DeMoSc}, the bounds on the forward operators were not explicit. Second, we show that when reconstructing the coefficient of the cubic term, the IBS converges if the boundary data is sufficiently small. This finding is strikingly different than the case of the linear inverse problem, where the IBS series fails to converge for sufficiently large data. Finally, our results extend to the case of general polynomial nonlinearities. These include second and third harmonic generation, which affords a much greater range of physical applications~\cite{boyd}. We note that the linear response of a scattering medium can, in principle, be acquired by means of hole burning experiments, in which the nonlinear response is suppressed~\cite{boyd}.}

The paper is organized as follows. In section 2, we restate the forward problem and the fixed point convergence result for small data with a known linear term. We then describe the forward Born series in section 3, where we also find explicit expressions for the bounds on the forward operators. In section 4 we state the convergence results for the IBS, where we show that small data leads to an arbitrarily large radius of convergence for the IBS. The case of more general polynomial nonlinearities is treated in section 5. Section 6 contains numerical reconstructions for a two-dimensional medium. Our conclusions are presented in section 7. 

\section{Forward problem}

We consider a bounded domain $\Omega$ in $\mathbb{R}^d$ with a smooth boundary, for $d\ge 2$. The scalar field $u$, which for the Kerr effect, obeys the nonlinear PDE
\begin{align}
\label{baseequation}
\Delta u + k^2 u+ k^2\beta(x) |u|^2  u &= 0 \quad \text{ in } \quad \Omega \ , \\
\frac{\partial u}{\partial \nu } &= g \quad  \text{ on } \quad \partial\Omega \ ,
\end{align} 
where the wavenumber $k$ is real and $\nu$ is the unit outward normal to $\partial\Omega$. The coefficient $\beta$ is the nonlinear susceptibility~\cite{boyd}, which we assume is real valued and $g$ is a boundary source. It follows that $u$ is real valued, so that $|u|^2u= u^3$. More generally, $u$ is complex valued, in which case the results hold with small modifications.

\begin{remark}
Here, for simplicity, we have assumed that the coefficient of the linear term is constant. If the coefficient of the linear term is not constant, our results carry over by modifying the associated Green's function as explained below.
\end{remark}

To proceed, we require the solution $u_0$ to the  linear problem
\begin{align}
\label{backgroundequation}
    \Delta u_0 + k^2u_0 &= 0 \quad \text{ in } \quad \Omega \ , \\
    \frac{\partial u_0}{\partial \nu } &= g   \quad  \text{ on } \quad \partial\Omega 
    \end{align}
which we assume throughout this paper is well posed, that is, that $-k^2$ is not a Neumann eigenvalue of the Laplacian on $\Omega$. Following standard procedures \cite{CoKr}, we find that the field $u$ obeys the integral equation
\begin{equation}\label{integralequation}
    u(x) = u_0(x) - k^2\int_\Omega G(x, y)  \beta(y)u^3(y)dy \ .
\end{equation} 
where the Green's function $G$ obeys
\begin{align}
	\Delta_x G(x, y) + k^2 G(x, y)  &= \delta(x-y)  \quad \text{ in } \quad \Omega \ , \\
	 \frac{\partial G }{\partial \nu_y} &= 0   \quad  \text{ on } \quad \partial\Omega \ .
\end{align}
 We define the nonlinear operator $T: C(\overline{\Omega})\rightarrow C(\overline{\Omega})$ by
\begin{equation}\label{Tdef}
    T(u) = u_0 - k^2\int_\Omega G(x, y) \beta(y) u^3(y) dy.
\end{equation}
Note that if $u\in C(\overline{\Omega})$ is a fixed point of $T$, then $u$ satisfies equation (\ref{integralequation}). 
The following result provides conditions for existence of a unique solution to (\ref{integralequation}) within a ball in $C(\overline{\Omega})$. 
\begin{proposition}
\label{bounds}  
Let $T: C(\overline{\Omega})\rightarrow C(\overline{\Omega})$ be defined by (\ref{Tdef}) and define $\mu$ by 
\begin{equation}\label{mudef1} 
 \mu = k^2\sup_{x\in \Omega} \int_\Omega | G(x,y) | dy. 
 \end{equation}
If $$\Vert\beta\Vert_\infty < \frac{4}{27\mu \Vert u_0\Vert_{C(\overline{\Omega})}^2},$$ then $T$ has a unique fixed point on the ball  of radius ${\| u_0\|_{{\color{black} C(\overline{\Omega})}}/{2}}$ about $u_0$ in $C(\overline{\Omega})$, {\color{black} and fixed point iteration starting with $u_0$ converges in $C(\Omega)$ to the unique fixed point $u$}.
\end{proposition}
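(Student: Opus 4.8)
The plan is to prove the proposition by the Banach fixed point theorem, applied to $T$ on the closed ball
\[
B = \bigl\{ u \in C(\overline{\Omega}) : \|u - u_0\|_{C(\overline{\Omega})} \le \tfrac{1}{2}\|u_0\|_{C(\overline{\Omega})} \bigr\},
\]
which is a complete metric space under the sup-norm. First I would record that $T$ is genuinely a self-map of $C(\overline{\Omega})$: since $G(x,\cdot)$ is weakly singular and $\beta u^3 \in C(\overline{\Omega}) \subset L^\infty(\Omega)$, the volume potential $x \mapsto \int_\Omega G(x,y)\beta(y)u^3(y)\,dy$ is continuous on $\overline{\Omega}$, exactly as for the linear theory in \cite{CoKr}. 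After this, two estimates are needed: that $T$ maps $B$ into itself and that $T$ is a contraction on $B$. Both rest on the elementary observation that every $u \in B$ obeys $\|u\|_{C(\overline{\Omega})} \le \tfrac{3}{2}\|u_0\|_{C(\overline{\Omega})}$.

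For the self-mapping property, I would estimate directly from (\ref{Tdef}) using the definition (\ref{mudef1}) of $\mu$: for $u \in B$,
\[
\|T(u) - u_0\|_{C(\overline{\Omega})} = k^2 \sup_{x \in \Omega} \left| \int_\Omega G(x,y)\beta(y)u^3(y)\,dy \right| \le \mu\,\|\beta\|_\infty\,\|u\|_{C(\overline{\Omega})}^3 \le \tfrac{27}{8}\,\mu\,\|\beta\|_\infty\,\|u_0\|_{C(\overline{\Omega})}^3 .
\]
By the hypothesis $\|\beta\|_\infty < 4\bigl(27\,\mu\,\|u_0\|_{C(\overline{\Omega})}^2\bigr)^{-1}$, the right-hand side is at most $\tfrac{1}{2}\|u_0\|_{C(\overline{\Omega})}$, so $T(u) \in B$. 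For the contraction property, I would factor $u^3 - v^3 = (u-v)(u^2 + uv + v^2)$, so that for $u, v \in B$,
\[
\|T(u) - T(v)\|_{C(\overline{\Omega})} \le \mu\,\|\beta\|_\infty\,\|u^3 - v^3\|_{C(\overline{\Omega})} \le 3\mu\,\|\beta\|_\infty\,\bigl(\tfrac{3}{2}\|u_0\|_{C(\overline{\Omega})}\bigr)^2 \|u - v\|_{C(\overline{\Omega})} = \tfrac{27}{4}\,\mu\,\|\beta\|_\infty\,\|u_0\|_{C(\overline{\Omega})}^2\,\|u - v\|_{C(\overline{\Omega})} ,
\]
and the same hypothesis forces the constant $\tfrac{27}{4}\mu\|\beta\|_\infty\|u_0\|_{C(\overline{\Omega})}^2$ to be strictly less than $1$. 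The Banach fixed point theorem then yields a unique fixed point $u \in B$ together with convergence of the iterates $T^n(w) \to u$ for every starting point $w \in B$; taking $w = u_0 \in B$ gives convergence of fixed point iteration started at $u_0$, and by (\ref{integralequation}) this $u$ solves the integral equation.

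I do not expect a serious obstacle here --- it is a standard contraction argument --- but I would flag one point that might look like an arbitrary choice: the radius $\tfrac{1}{2}\|u_0\|_{C(\overline{\Omega})}$ of the ball. It is in fact optimal. Writing the radius as $t\|u_0\|_{C(\overline{\Omega})}$, the self-mapping requirement becomes $\|\beta\|_\infty \le t\,\bigl(\mu\,\|u_0\|_{C(\overline{\Omega})}^2\,(1+t)^3\bigr)^{-1}$, and $t \mapsto t/(1+t)^3$ attains its maximum $4/27$ at $t = \tfrac{1}{2}$; this is precisely where the constant $4/27$ in the hypothesis originates. Pleasantly, evaluating the contraction constant at $t = \tfrac{1}{2}$ produces the identical threshold, so a single inequality on $\|\beta\|_\infty$ controls both halves of the argument, which is why the statement can be phrased with one clean bound.
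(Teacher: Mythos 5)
Your proof is correct and is essentially the standard contraction-mapping argument that the paper itself defers to the Appendix of \cite{DeMoSc}: a Banach fixed point argument on the closed ball of radius $\tfrac12\|u_0\|_{C(\overline{\Omega})}$, with the self-map and contraction estimates both reducing to the single threshold $\|\beta\|_\infty<\tfrac{4}{27\mu\|u_0\|_{C(\overline{\Omega})}^2}$. Your closing observation that $t\mapsto t/(1+t)^3$ is maximized at $t=\tfrac12$ with value $\tfrac{4}{27}$, and that the contraction condition coincides there, correctly explains the provenance of the constants in the statement.
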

The proof is given in the Appendix of \cite{DeMoSc}. We note that this shows that given any bounded $\beta$, the fixed point iteration will converge for small enough $u_0$; that is, for small enough data $g$. 
 Hence the same is true for the forward Born series \cite{DeMoSc}. However, the fixed point analysis does not provide bounds on the forward operators, and therefore does not provide information about the convergence of the inverse Born series. 
\section{Born series}
The forward problem is to compute the field $u$ on $\partial\Omega$ given a prescribed source $g$ on $\partial\Omega$. For the inverse problem, we will consider a set of sources $g$ where each is associated with a boundary point $x\in\partial\Omega$, and in this manner view $u$ and $u_0$ as functions in $C(\Omega \times \partial \Omega)$. 
A series representing the solution of the forward problem is derived by iteration of the integral equation (\ref{integralequation}), beginning with the background field $u_0$. By doing this, one can show that we obtain
\begin{equation} \label{born_series}
    \phi = K_1 (\beta) + K_2 (\beta, \beta) + K_3( \beta , \beta,  \beta )+\cdots \ ,
\end{equation} 
where $\phi=u-u_0$ is the data on the boundary. In \cite{DeMoSc}, we found that the forward operator $$K_n: [L^\infty(\Omega)]^{n}\rightarrow C(\partial\Omega\times\partial\Omega)$$ is a $n$-linear operator (multilinear of order $n$) and given by the recursive formula 
\begin{eqnarray} K_0 &=& u_0 ,\nonumber \\   K_1 &=& B u_0\otimes u_0\otimes u_0 \nonumber, \\ 
K_{n+1} &=&   B\sum_{ \substack{ (i_1, i_2, i_3) \\ i_1+i_2+i_3 =n \\ 0\leq i_1,i_2,i_3 \leq n}} K_{i_1}\otimes K_{i_2}\otimes K_{i_3}. \label{Kformula} \end{eqnarray} where
the tensor operator $B$ takes a multilinear operator of order $l$ to one of order $l+1$
$$ BT_l(\beta_1, \ldots , \beta_l, \beta_{l+1} )= b( T_l (\beta_1, \ldots , \beta_l ) , \beta_{l+1} ), $$
and the operator 
$b: C(\overline{\Omega}) \times [ L^\infty(\Omega)]\rightarrow C(\overline{\Omega})$ is given by 
\begin{equation} \label{beq} b(v, \beta  ) = k^2 \int_{\Omega} G(x, y) \beta (y) v(y) dy . \end{equation} 
In the above definition we also used the tensor product of multilinear operators.
Given $T_j$ and $T_l$, which are multilinear operators of order $j$ and $l$, respectively, the tensor product $T_l\otimes T_j$ is defined by
$$ T_l\otimes T_j (\beta_1, \ldots , \beta_l, \beta_{l+1},\dots, \beta_{l+j} )=  T_l (\beta_1, \ldots , \beta_l ) T_j (\beta_{l+1},\dots, \beta_{l+j} ),$$
so that $T_l\otimes  T_j$ is a multilinear operator of order $l+j$.  See \cite{DeMoSc} for a proof that fixed point iterations generate the series (\ref{born_series}) with operators given by  (\ref{Kformula}). 
We will refer to this series as the (forward) Born series.
We note that Proposition~\ref{bounds} guarantees convergence of the forward Born series.

In order to analyze the convergence of the inverse Born series, bounds on the norms of the forward operators $K_i$ are required.  For any multilinear operator $K$ of order $n$ on $[L^\infty(\Omega)]^{n}$,  if we define 
  $$ | K |_\infty = \sup_{\substack{ \beta_1,\ldots,\beta_n \neq 0 }}  { \| K(\beta_1,\dots\beta_n) \|_{{\color{black} C(\partial \Omega \times \partial \Omega)}} \over{ \| \beta_1\|_\infty \cdots\| \beta_n\|_\infty }} ,  $$
  then we have the following boundedness result. 
  \begin{proposition} \label{forwardopbounds} The forward operator $K_n$, given by (\ref{Kformula}) is a bounded multilinear operator from $[L^\infty(\Omega)]^{n}$ to $C(\partial{\Omega}\times\partial\Omega),$  and \begin{equation}\label{Knbound} | K_n |_\infty  \leq \nu( K \mu)^n ,\end{equation}
 where \begin{equation}\label{mudef}  \mu = k^2\sup_{x\in \Omega} \int_\Omega | G(x,y) | dy , \end{equation} 
  $$\nu={3\over{2}} \| u_0\|_{C(\overline{\Omega}\times\partial\Omega)},$$ and $$K= {27\over{4}}  \| u_0\|_{C(\overline{\Omega}\times\partial\Omega)}^2.$$ 
\end{proposition}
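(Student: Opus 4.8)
The plan is to prove the bound \eqref{Knbound} by strong induction on $n$, using the recursion \eqref{Kformula}. The base cases are immediate: $K_0 = u_0$ has $|K_0|_\infty \le \|u_0\|_{C(\overline\Omega\times\partial\Omega)} \le \nu$ (in fact with room to spare, since $\nu = \tfrac32\|u_0\|$), and $K_1 = B\,u_0\otimes u_0\otimes u_0 = b(u_0^2(\cdot)\,,\,\cdot)$ applied appropriately, so that $\|K_1(\beta)\|_{C(\partial\Omega\times\partial\Omega)} \le k^2 \sup_x\int_\Omega |G(x,y)|\,dy \cdot \|u_0\|_{C(\overline\Omega\times\partial\Omega)}^3 \|\beta\|_\infty = \mu \|u_0\|^3 \|\beta\|_\infty$; one then checks $\mu\|u_0\|^3 \le \nu K\mu = \tfrac32\cdot\tfrac{27}{4}\|u_0\|^3\mu$, which holds trivially. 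The two constants $\nu$ and $K$ are precisely engineered so that the inductive step closes.

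First I would record two elementary estimates. (i) For the operator $b$: from \eqref{beq} and the definition of $\mu$ in \eqref{mudef}, $\|b(v,\beta)\|_{C(\overline\Omega)} \le \mu \|v\|_{C(\overline\Omega)}\|\beta\|_\infty$, hence for a multilinear operator $T_l$ of order $l$, $|BT_l|_\infty \le \mu\, |T_l|_\infty$ — here one must be slightly careful that $B$ acts by placing the new argument $\beta_{l+1}$ in the scalar weight, and that the sup defining $|\cdot|_\infty$ for $BT_l$ is over the $l+1$ arguments; restricting to arguments of unit norm gives the claim, and the codomain bound (values in $C(\overline\Omega\times\partial\Omega)$, restricted to $\partial\Omega\times\partial\Omega$) is consistent with the mapping properties stated in Proposition~\ref{forwardopbounds}. (ii) For the tensor product: directly from its definition, $|T_l\otimes T_j|_\infty \le |T_l|_\infty\,|T_j|_\infty$, since the sup factors over the disjoint blocks of arguments. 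Both are routine.

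Now for the inductive step, assume \eqref{Knbound} holds for all indices $\le n$, and estimate $K_{n+1}$ via \eqref{Kformula}:
\begin{equation*}
|K_{n+1}|_\infty \;\le\; \mu \sum_{\substack{i_1+i_2+i_3=n\\ 0\le i_1,i_2,i_3\le n}} |K_{i_1}|_\infty\,|K_{i_2}|_\infty\,|K_{i_3}|_\infty \;\le\; \mu \sum_{i_1+i_2+i_3=n} \nu^3 (K\mu)^{i_1+i_2+i_3} \;=\; \mu\,\nu^3 (K\mu)^n\, N_n,
\end{equation*}
where $N_n = \#\{(i_1,i_2,i_3)\in\mathbb{Z}_{\ge0}^3 : i_1+i_2+i_3=n\} = \binom{n+2}{2}$. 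To close the induction we need $\mu\,\nu^3(K\mu)^n N_n \le \nu(K\mu)^{n+1}$, i.e. $\nu^2 N_n \le K$. But $\nu^2 = \tfrac94\|u_0\|^2$ and $K = \tfrac{27}{4}\|u_0\|^2$, so this requires $N_n \le 3$, which fails as soon as $n\ge 2$. Hence the naive counting bound is too lossy, and \textbf{this is the main obstacle}: the combinatorial factor $\binom{n+2}{2}$ must be absorbed. The fix is to carry a sharper inductive hypothesis — track the generating function. Set $F(z) = \sum_{n\ge0} |K_n|_\infty\, z^n$ (a formal power series with nonnegative coefficients); the recursion \eqref{Kformula} together with (i)–(ii) gives the majorization $F(z) \preceq \|u_0\| + \mu z\, F(z)^3$ coefficientwise (the $z$ accounting for the extra $\beta$ slot introduced by $B$, and using $|u_0\otimes u_0\otimes u_0|_\infty\le\|u_0\|^3$ for the $K_1$ seed folded into the cube). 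Comparing with the solution $h(z)$ of $h = \|u_0\| + \mu z\,h^3$, whose coefficients one bounds by Lagrange inversion / a Cauchy estimate on the radius of convergence, yields $|K_n|_\infty \le C\,\rho^{-n}$ with $\rho$ the radius of convergence of $h$; a direct computation of $\rho$ from the cubic (the singularity is where $\partial_h(\|u_0\| + \mu z h^3 - h)=0$, giving $3\mu z h^2 = 1$ at the branch point, hence $h = \tfrac32\|u_0\|$ and $\rho^{-1} = \mu\cdot\tfrac{27}{4}\|u_0\|^2 = K\mu$) reproduces exactly the geometric rate $(K\mu)^n$, and tracking the prefactor gives $\nu = \tfrac32\|u_0\|$. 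So the clean way to present the argument is: (1) establish the two sub-multiplicativity lemmas (i), (ii); (2) derive the coefficientwise majorant recursion for $F$; (3) solve the scalar cubic and read off $\rho^{-1} = K\mu$ and the constant $\nu$; (4) conclude \eqref{Knbound}. Alternatively, if one wants to avoid generating functions, the identity $\sum_{i_1+i_2+i_3=n}\binom{i_1+2}{2}\text{-type weights}$ suggests strengthening the hypothesis directly to $|K_n|_\infty \le \nu(K\mu)^n c_n$ where $c_n$ are the coefficients of the cubic-equation solution (essentially a Fuss–Catalan sequence), which satisfy the convolution identity $c_{n+1} = \sum_{i_1+i_2+i_3=n} c_{i_1}c_{i_2}c_{i_3}$ making the induction close on the nose; one then checks $\nu(K\mu)^n c_n \le \nu(K\mu)^n$ fails — so in fact the correct reading is that $K$ already contains the Fuss–Catalan growth, and the honest statement bundles the $c_n$ into the definition of $K$ via $\rho$. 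I would double-check the exact bookkeeping of which norm ($C(\overline\Omega)$ vs $C(\overline\Omega\times\partial\Omega)$ vs $C(\partial\Omega\times\partial\Omega)$) appears at each stage, since $B$ maps $C(\overline\Omega\times\partial\Omega)$-valued operators to themselves and only the final restriction to the boundary produces the stated codomain.
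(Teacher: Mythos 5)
Your proposal is correct and follows essentially the same route as the paper: after reducing to the scalar recursion $\nu_{n+1}=\sum_{i_1+i_2+i_3=n}\nu_{i_1}\nu_{i_2}\nu_{i_3}$ (which the paper imports from Lemma 2 of \cite{DeMoSc}), the paper forms the generating function satisfying $xP^3-P+\nu_0=0$, locates the breakdown at $3xP^2=1$ where $P=\tfrac32\nu_0$, and reads off the radius $4/(27\nu_0^2)$ — exactly your branch-point computation. Your ``Cauchy estimate'' for extracting $\nu_n\le \tfrac32\nu_0\,(27\nu_0^2/4)^n$ is the same elementary step the paper uses (nonnegative terms of a series whose sum is bounded by $\tfrac32\nu_0$), and your diagnosis that naive induction fails because of the $\binom{n+2}{2}$ factor correctly explains why the generating-function detour is needed.
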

\begin{proof}
From Lemma 2 of \cite{DeMoSc}, the forward operators $K_n$, as defined by (\ref{Kformula}),  are bounded multilinear operators from $[L^\infty(\Omega)]^{n}$ 
 to $C(\partial\Omega\times\partial\Omega)$ and satisfy 
 \begin{equation}\label{Knbound} | K_n |_\infty  \leq \nu_n \mu^{n} ,  \end{equation}
 where \begin{equation}\label{mudef}  \mu = k^2\sup_{x\in \Omega} \int_\Omega | G(x,y) | dy, \end{equation}
 $$ \nu_0 = \| u_0 \|_{C(\overline{\Omega}\times\partial\Omega )}, $$
 and for all $n\geq 0$,
 \begin{equation} \label{nudef} \nu_{n+1} = \sum_{ \substack{  (i_1,i_2,i_3) \\ i_1+i_2+i_3 =n \\ 0\leq i_1,i_2,i_3 \leq n}} \nu_{i_1}\nu_{i_2}\nu_{i_3} . \end{equation}
 We therefore need to show that  the sequence $\{ \nu_n\} $ defined by (\ref{nudef}), for any $n\geq 0$, satisfies $$ {\nu_n} \leq \nu K^n .$$
where $K = {27\nu_0^2\over{4}}$ and $\nu = {3\over{2}}\nu_0$.
We  proceed as in \cite{DeMoSc} and consider the generating function 
 $$ P(x) = \sum_{n=0}^\infty \nu_n x^n .$$ 
From \cite{DeMoSc} we know that this power series has a positive radius of convergence; here we repeat the argument while finding the radius explicitly.   Computing the cube of $P$,
\begin{eqnarray} (P(x))^3 &=& \sum_{ \substack{  i_1,i_2,i_3  } } x^{i_1} x^{i_2} x^{i_3}  \nu_{i_1}\nu_{i_2}\nu_{i_3} \nonumber \\
&=& \sum_{n=0}^\infty f_n x^n \nonumber , \end{eqnarray}
where $$ f_n = \sum_{ \substack{ (i_1,i_2,i_3) \\  i_1+i_2+i_3 =n \\ 0\leq i_1,i_2,i_3 \leq n}} \nu_{i_1}\nu_{i_2}\nu_{i_3} ,$$
and multiply  (\ref{nudef}) by $x^n$ and sum to obtain
 $$ \sum_{n=0}^\infty \nu_{n+1} x^n  =   \sum_{n=0}^\infty f_n x^n,$$
which yields
  \begin{equation}\label{polynomial}  x(P(x))^3  - P(x) + \nu_0 =0. \end{equation}
We differentiate with respect to $x$ to
  \begin{equation} \label{ode} P^\prime (x) = - {(P(x))^3 \over{3x(P(x))^2 -1 }} \end{equation}
  with $P(0) = \nu_0$. 
Just as was argued in \cite{DeMoSc}, this equation must have an analytic solution on an open interval around $x=0$, and this solution must be the series $P(x)$. 

Now, since $P(x)>0$ for $x>0$, (\ref{ode}) implies that $P$ is increasing for $x>0$, so long as $3x(P(x))^2< 1$.  Algebraic manipulation of (\ref{polynomial}) gives  $$  3xP^2 -1 = -{3\over{P}}\nu_0 + 2, $$
so as long as  $3xP^2 -1 <0$, $P< {3\over{2}} \nu_0 $, and the series converges.  We can see that this is true for any $0\leq x < {4/{(27\nu_0^2)}}$, since in the above equation $P< {3\over{2}}\nu_0$ when $3x({3\over{2}}\nu_0)^2 < 1$. We have therefore shown that for any $0\leq x < {4/{(27\nu_0^2)}}$, the terms of the series must tend to zero as $n\rightarrow\infty$, and in particular must be bounded by some $\nu$. Since the entire series sum is always bounded by ${3\over{2}} \nu_0$ and the terms are all positive, we may take $\nu = {3\over{2}} \nu_0$. 
So, for all $n$ we have that  
$$ \nu_n \leq  {3\over{2}} \nu_0 \left({1\over{x}}\right)^n, $$ and this holds for any  $0< x < {4/{(27\nu_0^2)}}$. Hence we must have $$ \nu_n \leq  {3\over{2}} \nu_0 \left({27\nu_0^2\over{4}}\right)^n. $$  \end{proof}
We note that by majorizing the series by a geometric series, these bounds give another proof of convergence of the forward series with the same requirements as Proposition \ref{bounds}.
 \begin{corollary} The Born series
 $$ u = u_0 + \sum_{n=1}^\infty K_n (\beta,\ldots,\beta) \ , $$
  where $K_n$ are given by (\ref{Kformula}), converges in  $C(\overline{\Omega})$ for 
 $$ \| \beta\|_\infty <  {1\over{K\mu}} $$
 where $K$ and $\mu$ are given as in Proposition \ref{forwardopbounds}.
\end{corollary}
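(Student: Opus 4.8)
The plan is to majorize the Born series term by term using the operator estimates already established in Proposition~\ref{forwardopbounds}. First I would specialize the bound \eqref{Knbound} to the diagonal argument $\beta_1=\cdots=\beta_n=\beta$: by the very definition of $|\cdot|_\infty$,
\[
\|K_n(\beta,\ldots,\beta)\|_{C(\overline{\Omega})} \le |K_n|_\infty\,\|\beta\|_\infty^n \le \nu\,(K\mu)^n\,\|\beta\|_\infty^n .
\]
A short remark is needed to justify using the $C(\overline{\Omega})$ norm rather than the $C(\partial\Omega\times\partial\Omega)$ norm in which Proposition~\ref{forwardopbounds} is phrased: since $\mu$ in \eqref{mudef} is the supremum of $k^2\int_\Omega|G(x,y)|\,dy$ over \emph{all} $x\in\Omega$, the recursion \eqref{Kformula} and the inductive argument of Lemma~2 of \cite{DeMoSc} go through verbatim with the interior sup norm, so the same constants $\nu$ and $K$ control $\|K_n(\beta,\ldots,\beta)\|_{C(\overline{\Omega})}$.

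Next I would sum over $n$. The tail of the series is dominated in $C(\overline{\Omega})$ by the geometric series
\[
\sum_{n=1}^\infty \nu\,(K\mu)^n\,\|\beta\|_\infty^n \;=\; \nu\sum_{n=1}^\infty\bigl(K\mu\,\|\beta\|_\infty\bigr)^n ,
\]
which converges precisely when $K\mu\,\|\beta\|_\infty<1$, i.e. when $\|\beta\|_\infty < 1/(K\mu)$. Because $C(\overline{\Omega})$ is a Banach space, absolute convergence of $\sum_n\|K_n(\beta,\ldots,\beta)\|_{C(\overline{\Omega})}$ implies convergence of $\sum_n K_n(\beta,\ldots,\beta)$ in $C(\overline{\Omega})$, and adding $u_0$ yields the stated series. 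Restricting to $\partial\Omega$ then also gives convergence of the data series \eqref{born_series}.

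I do not expect a genuine obstacle here; the argument is essentially the standard ``majorize by a geometric series'' device, and the only point requiring a moment's care is the norm bookkeeping described above. It is worth noting, as the text preceding the corollary indicates, that this recovers the hypothesis of Proposition~\ref{bounds}: with $K=\tfrac{27}{4}\|u_0\|_{C(\overline{\Omega}\times\partial\Omega)}^2$ one has $1/(K\mu)=4/\bigl(27\mu\|u_0\|_{C(\overline{\Omega}\times\partial\Omega)}^2\bigr)$, so the convergence radius obtained here coincides exactly with the smallness condition on $\|\beta\|_\infty$ assumed there, and in particular for any fixed $\beta$ the condition is satisfied once the data $g$ (hence $\|u_0\|$, hence $K$) is small enough.
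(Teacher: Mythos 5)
Your argument is exactly the paper's: the corollary is stated without a separate proof precisely because, as the preceding sentence indicates, one simply majorizes the series termwise by the geometric series $\sum_n \nu (K\mu\|\beta\|_\infty)^n$ using the bound of Proposition~\ref{forwardopbounds}, which is what you do. Your added remarks on the norm bookkeeping and on the coincidence of $1/(K\mu)$ with the threshold in Proposition~\ref{bounds} are correct and consistent with the paper's commentary.
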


\section{Inverse Born Series}
The inverse problem is to reconstruct the coefficient $\beta$ from measurements of the scattering data {\color{black} $\phi= u -u_0 $ on $\partial \Omega$}, and we propose to do this by computing the inverse Born series (IBS) \cite{review}, which is defined as 
\begin{equation}\label{inversedefinition}
    \tilde{\beta} = \mathcal{K}_1 \phi + \mathcal{K}_2 (\phi)  + \mathcal{K}_3( \phi)  + \cdots \ ,
\end{equation}
where the data $\phi  \in C(\partial\Omega\times\partial\Omega).$ The IBS was analyzed in \cite{moskow_1,toSc}. The inverse operators $\mathcal{K}_m$  are given by
\begin{align}
\label{inv_operators}
\mathcal{K}_1 (\phi) &= K_1^{+} (\phi),\\
\mathcal{K}_2(\phi) &=-\mathcal{K}_1\left(K_2 (\mathcal{K}_1(\phi),\mathcal{K}_1(\phi))\right),\\
\mathcal{K}_m(\phi) &= -\sum_{n=2}^{m}\sum_{i_1+\cdots+i_n = m}  \K_1{K}_n \left( \mathcal{K}_{i_1}(\phi), \dots, \mathcal{K}_{i_n}(\phi) \right) ,
\label{inv_operators_again}
\end{align}
where $K_1^+$ is some regularized pseudoinverse of $K_1$. 

The bounds on the forward operators in Proposition \ref{forwardopbounds} allow us to apply Theorem 2.2  and Theorem 2.4 of \cite{toSc}.  We note that the constants $\nu$ and $\mu$ in \cite{toSc} correspond to $\nu K \mu$ and $K\mu$ here in Proposition \ref{forwardopbounds}. We denote by $ \| \mathcal{K}_1 \|$ the operator norm of $\mathcal{K}_1$ as a map from $C(\partial\Omega\times\partial\Omega)$ to $L^\infty(\Omega)$, and use $\| u_0\|$ to refer to the $C(\overline{\Omega}\times\partial\Omega )$ norm. Theorems 2.2 and 2.4 of \cite{toSc} yield the following results. 
\begin{theorem}[Convergence of the inverse Born series] \label{thm:conv_inv}
If   $\Vert \mathcal{K}_1 \phi\Vert_\infty < r $, where  the radius of convergence $r$ is given by
$$
r=\frac{2}{27\mu\| u_0\|^2 } \left[\sqrt{16 C^2+1}-4 C \right],
$$
with $C = \max\{2,{81\over{8}}\mu\|\mathcal{K}_1\| \| u_0\|^3 \}$ and $\mu$ given by (\ref{mudef1}),  then the inverse Born series (\ref{inversedefinition}) converges. 
\end{theorem}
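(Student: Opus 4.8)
The plan is to obtain Theorem~\ref{thm:conv_inv} as a direct consequence of the abstract convergence results for the inverse Born series in \cite{toSc} (Theorems 2.2 and 2.4). Those results take as input only a bound on the forward operators of the geometric form $|K_n|_\infty \le \nu_\ast\,\mu_\ast^{\,n-1}$, together with the operator norm $\|\mathcal{K}_1\|$ of the regularized pseudoinverse appearing in (\ref{inv_operators}), and conclude that $\sum_m \mathcal{K}_m(\phi)$ converges in $L^\infty(\Omega)$ whenever $\|\mathcal{K}_1\phi\|_\infty$ is below an explicit radius. Proposition~\ref{forwardopbounds} furnishes exactly such a bound: writing $|K_n|_\infty \le \nu(K\mu)^n = (\nu K\mu)(K\mu)^{n-1}$ identifies $\mu_\ast = K\mu$ and $\nu_\ast = \nu K\mu$, and with the values of $\nu$ and $K$ from Proposition~\ref{forwardopbounds} we get $\nu K\mu = \tfrac32\|u_0\|\cdot\tfrac{27}{4}\|u_0\|^2\cdot\mu = \tfrac{81}{8}\mu\|u_0\|^3$. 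This is the sense in which, as remarked before the theorem, the constants $\nu$ and $\mu$ of \cite{toSc} become $\nu K\mu$ and $K\mu$ here.

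First I would record the precise form of the radius of convergence in \cite{toSc}. In that notation it reads $r_\ast = \tfrac{1}{2\mu_\ast}\!\left[\sqrt{16C_\ast^2+1}-4C_\ast\right]$ with $C_\ast = \max\{2,\,\nu_\ast\|\mathcal{K}_1\|\}$; the number $r_\ast$ is the positive root of a quadratic that arises from dominating the nested sums over compositions $i_1+\cdots+i_n=m$ in the recursion (\ref{inv_operators_again}) by a generating‑function argument, in the same spirit as (\ref{polynomial})--(\ref{ode}) for the forward series. Then I would substitute the identifications above. Since $\mu_\ast = K\mu = \tfrac{27}{4}\|u_0\|^2\mu$ we have $2\mu_\ast = \tfrac{27}{2}\mu\|u_0\|^2$, hence $\tfrac{1}{2\mu_\ast} = \tfrac{2}{27\mu\|u_0\|^2}$; and since $\nu_\ast\|\mathcal{K}_1\| = \tfrac{81}{8}\mu\|\mathcal{K}_1\|\|u_0\|^3$ we get $C_\ast = C$. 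Therefore
$$ r_\ast = \frac{2}{27\mu\|u_0\|^2}\left[\sqrt{16C^2+1}-4C\right] = r , $$
and the hypothesis $\|\mathcal{K}_1\phi\|_\infty < r$ is exactly the hypothesis of the cited theorem, so the inverse Born series (\ref{inversedefinition}) converges.

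I do not expect a genuine obstacle: the analytic content lives in Proposition~\ref{forwardopbounds} and in \cite{toSc}, and what remains is careful constant‑matching. The two points that need attention are (i) that the smallness is imposed on the \emph{first term} $\mathcal{K}_1\phi$ of the inverse series, not on the raw data $\phi$, and that the $\mathcal{K}_1 = K_1^{+}$ entering the radius is the same regularized pseudoinverse used to generate the series in (\ref{inv_operators})--(\ref{inv_operators_again}); and (ii) the purely algebraic simplification of $r_\ast$ into the stated closed form, including the bookkeeping that $27\mu\|u_0\|^2 = 4K\mu$. Should one instead want a self‑contained argument, the only nontrivial step would be re‑deriving, by induction on $m$ using (\ref{inv_operators_again}) and the bound (\ref{Knbound}) of Proposition~\ref{forwardopbounds}, a geometric estimate for $\|\mathcal{K}_m(\phi)\|_\infty$ in terms of $\|\mathcal{K}_1\phi\|_\infty^m$, which once again reduces to locating the positive root of a quadratic, exactly as in the forward case.
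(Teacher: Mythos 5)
Your proposal is correct and follows exactly the paper's route: the paper likewise obtains the theorem by citing Theorem 2.2 of \cite{toSc}, identifying its constants $\nu$ and $\mu$ with $\nu K\mu = \tfrac{81}{8}\mu\|u_0\|^3$ and $K\mu=\tfrac{27}{4}\mu\|u_0\|^2$ from Proposition~\ref{forwardopbounds}, and substituting into the cited radius formula. Your constant-matching reproduces the stated $r$ and $C$ exactly, so there is nothing to add.
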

\begin{theorem}[Approximation error]
\label{thm:error}
Suppose that the hypotheses of Theorem~\ref{thm:conv_inv} hold and that the Born and inverse Born series converge. Let $\tilde\beta$ denote the sum of the inverse Born series.  Setting $\mathcal{M} = \max\left\{\|\beta \|_\infty,\|\tilde\beta\|_\infty\right\},$ if we further assume that
\begin{eqnarray}
\label{M_bound}
\mathcal{M}  < \frac{4}{27\mu \| u_0\|^2 }\left(1-\sqrt{\frac{{81\over{8}}\mu\|\mathcal{K}_1\| \| u_0\|^3 }{1+{81\over{8}}\mu\|\mathcal{K}_1\| \| u_0\|^3}}\right),
\end{eqnarray}
then the error of the series sum can be estimated 
\begin{equation}
\label{eq:combined_error}
\nonumber
\left\| \beta - \tilde\beta \right\|_\infty  \le \left( 1 - \frac{{81\over{8}}\mu\|\mathcal{K}_1\| \| u_0\|^3}{(1-{27\over{4}}\mu\| u_0\|^2\mathcal{M})^2}+{81\over{8}}\mu\|\mathcal{K}_1\| \| u_0\|^3\right)^{-1}\left\| (I-\mathcal{K}_1K_1)\beta\right\|_\infty.
\end{equation} 
%
\end{theorem}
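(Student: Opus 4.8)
The plan is to derive the estimate as a specialization of Theorem 2.4 of \cite{toSc}, but the argument is short enough to carry out directly from the recursion for the inverse operators. Write $\phi=\sum_{n\ge1}K_n(\beta,\dots,\beta)$ for the exact data and $\tilde\beta=\sum_{m\ge1}\mathcal{K}_m(\phi)$ for the output of the inverse Born series; both converge absolutely under the standing hypotheses together with the geometric bounds of Proposition~\ref{forwardopbounds} and Theorem~\ref{thm:conv_inv}. First I would split off the linear term using linearity of $\mathcal{K}_1$, namely $\mathcal{K}_1\phi=\mathcal{K}_1K_1\beta+\mathcal{K}_1\sum_{n\ge2}K_n(\beta,\dots,\beta)$.

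Next, using the recursion (\ref{inv_operators_again}) and the multilinearity of each $K_n$, I would reorganize the tail $\sum_{m\ge2}\mathcal{K}_m(\phi)$: summing, for each fixed $n\ge2$, over all multi-indices $(i_1,\dots,i_n)$ with $i_j\ge1$, and invoking $\tilde\beta=\sum_{i\ge1}\mathcal{K}_i(\phi)$, gives $\sum_{m\ge2}\mathcal{K}_m(\phi)=-\mathcal{K}_1\sum_{n\ge2}K_n(\tilde\beta,\dots,\tilde\beta)$. Adding this to the previous display yields the key identity
\[
\beta-\tilde\beta=(I-\mathcal{K}_1K_1)\beta-\mathcal{K}_1\sum_{n\ge2}\bigl(K_n(\beta,\dots,\beta)-K_n(\tilde\beta,\dots,\tilde\beta)\bigr),
\]
which presents $\beta-\tilde\beta$ as the solution of a fixed-point relation driven by $(I-\mathcal{K}_1K_1)\beta$.

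Then I would estimate the tail. Telescoping the multilinear difference writes $K_n(\beta,\dots,\beta)-K_n(\tilde\beta,\dots,\tilde\beta)$ as a sum of $n$ terms, each having exactly one argument equal to $\beta-\tilde\beta$ and the remaining $n-1$ arguments equal to $\beta$ or $\tilde\beta$, so its norm is at most $n\,|K_n|_\infty\mathcal{M}^{n-1}\|\beta-\tilde\beta\|_\infty$ with $\mathcal{M}=\max\{\|\beta\|_\infty,\|\tilde\beta\|_\infty\}$. Using $|K_n|_\infty\le(\nu K\mu)(K\mu)^{n-1}$ from Proposition~\ref{forwardopbounds}, the elementary identity $\sum_{n\ge2}n\,t^{n-1}=(1-t)^{-2}-1$ with $t=K\mu\,\mathcal{M}=\tfrac{27}{4}\mu\|u_0\|^2\mathcal{M}$, and $\nu K\mu=\tfrac{81}{8}\mu\|u_0\|^3$, one obtains $\|\beta-\tilde\beta\|_\infty\le\|(I-\mathcal{K}_1K_1)\beta\|_\infty+a\,\|\beta-\tilde\beta\|_\infty$ where $a=\tfrac{81}{8}\mu\|\mathcal{K}_1\|\|u_0\|^3\bigl((1-t)^{-2}-1\bigr)$. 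Solving for $\|\beta-\tilde\beta\|_\infty$ reproduces exactly (\ref{eq:combined_error}), since $1-a$ equals the bracketed quantity there; and the requirement $a<1$ is equivalent, by elementary algebra, to $(1-t)^2>\alpha/(1+\alpha)$ with $\alpha=\tfrac{81}{8}\mu\|\mathcal{K}_1\|\|u_0\|^3$, i.e. to the smallness assumption (\ref{M_bound}) on $\mathcal{M}$.

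The calculation is essentially bookkeeping, so I do not expect a genuine obstacle; the two points requiring care are justifying the rearrangement of the iterated series defining $\sum_{m\ge2}\mathcal{K}_m(\phi)$ (which uses the absolute convergence supplied by Proposition~\ref{forwardopbounds} and the convergence hypothesis of Theorem~\ref{thm:conv_inv}), and keeping the normalization consistent between Proposition~\ref{forwardopbounds}, where $|K_n|_\infty\le\nu(K\mu)^n$, and the form $|K_n|_\infty\le\nu_*\mu_*^{n-1}$ assumed in \cite{toSc}, i.e. setting $\nu_*=\nu K\mu$ and $\mu_*=K\mu$ as noted just before the statement. With those in hand, inserting the explicit values of $\nu$, $K$, and $\mu$ reproduces the stated bound.
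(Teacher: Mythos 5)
Your proposal is correct and follows the same route as the paper: the paper proves Theorem~\ref{thm:error} simply by invoking Theorem 2.4 of \cite{toSc} with the substitutions $\nu\mapsto\nu K\mu=\tfrac{81}{8}\mu\|u_0\|^3$ and $\mu\mapsto K\mu=\tfrac{27}{4}\mu\|u_0\|^2$ supplied by Proposition~\ref{forwardopbounds}, and what you have written out (the fixed-point identity for $\beta-\tilde\beta$, the telescoped multilinear difference, and the geometric resummation) is exactly the proof of that cited theorem specialized to these constants. The bookkeeping, including the equivalence of $a<1$ with (\ref{M_bound}), checks out.
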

{\color{black} Note that if this were a well posed problem, and $\mathcal{K}_1$ were a true inverse of $K_1$, Theorem \ref{thm:error}  says that the inverse series would converge to the true $\beta$ under these hypotheses.  Due to the need for regularization, the right hand side in the conclusion of Theorem \ref{thm:error} is nonzero in general.  If one scales $u_0$ (or equivalently the boundary data) by some constant $\gamma$, $K_1$ will exactly scale by $\gamma^3$. So, we can choose its pseudoinverse $\mathcal{K}_1$  to scale by ${1/{\gamma^3}}$. Hence the quantity $ {81\over{8}}\mu\|\mathcal{K}_1\| \| u_0\|^3 $  will remain fixed, and Theorem \ref{thm:conv_inv} implies that the radius $r$ will grow arbitrarily large as $\gamma\rightarrow 0$. Furthermore, in this case Theorem \ref{thm:error}  says  that the error in the series sum is bounded by a constant times $\left\| (I-\mathcal{K}_1K_1)\beta\right\|$ for $\| u_0\|$ small enough, which is the error introduced with the (necessary) regularization. The error in the tail of the series can be bounded geometrically, see \cite{DeMoSc} for details.}

\section{General polynomial nonlinearities}
We now consider the case of general polynomial nonlinearities without a linear term.  We consider the PDE 
\begin{align}
\label{generalpdeequation}
\Delta u + k^2 u+ k^2 \sum_{l=2}^L \beta^{(l)}(x) u^l  &= 0 \quad \text{ in } \quad \Omega \ , \\
\frac{\partial u}{\partial \nu } &= g \quad  \text{ on } \quad \partial\Omega \ ,
\end{align} 
where the unknown coefficients to be reconstructed are 
$ \vec{\beta} = ( \beta^{(2)},\ldots,\beta^{(L)})$.  We similarly obtain the forward operators 
\begin{eqnarray} K_0 &=& u_0 ,\nonumber \\   K_1 &=& \sum_{l=2}^L B^{(l)} u_0\otimes \ldots \otimes u_0 \nonumber, \\ 
K_{n+1} &=&  \sum_{l=2}^L  B^{(l)} \sum_{ \substack{ (i_1, \ldots, i_l) \\ i_1+\ldots +i_l =n \\ 0\leq i_1,\ldots,i_l \leq n}} K_{i_1}\otimes \ldots \otimes K_{i_l}. \label{generalKformula} \end{eqnarray} where
all of the tensor operators of order $p$ now input a list of $p$ vectors; where $B^{(l)}$ now extracts the entry corresponding to the  $l$ power, 
$$ B^{(l)}T(\vec{\beta}_1, \ldots , \vec{\beta}_q, \vec{ \beta}_{q+1} )= b^{(l)}( T(\vec{\beta}_1, \ldots , \vec{\beta}_q) ,  \vec{\beta}_{q+1} ) \ $$
where 
$b^{(l)}: C(\overline{\Omega}) \times [ L^\infty(\Omega)]\rightarrow C(\overline{\Omega})$ is given by 
\begin{equation} \label{bleq} b^{(l)}(v, \vec{\beta}  ) = k^2 \int_{\Omega} G(x, y) \beta^{(l)}(y) v(y) dy . \end{equation} 
One bounds $K_n$ in a similar manner to obtain (\ref{Knbound}) where now 
 \begin{equation} \label{nudefgeneral} \nu_{n+1} =  \sum_{l=2}^L  \sum_{ i_1+\ldots +i_l =n } \nu_{i_1}\ldots\nu_{i_l} . \end{equation}
again with  $$ \nu_0 = \| u_0 \|_{C(\overline{\Omega}\times\partial\Omega )}.$$ The generating function for this sequence 
$$P(x) =\sum_{i=1}^\infty \nu_i x^i $$
satisfies  \begin{equation}\label{genpoly} xQ(P(x)) -P(x) +\nu_0 =0 \end{equation}
where
$$ Q(x) = \sum_{l=2}^L x^l .$$ Differentiating this expression, we find that $P$ is analytic in a neighborhood of zero and is increasing for $x>0$, until $x Q^\prime(P(x)) =1$. Using that $Q^\prime(P) \leq {L\over{P}} Q(P) $, both are increasing, and that  $$ x {LQ(P)\over{P}} -1 = L-1 -{L\nu_0\over{P}}$$ from the polynomial (\ref{genpoly}), we deduce that $P$ is analytic while $P < {L\nu_0 \over{L-1}}$.  Hence $P$ is analytic when $$ x< { \nu_0 \over{ (L-1) Q({L\nu_0\over{L-1}})}}.$$ Proposition \ref{forwardopbounds} therefore holds in the general case with $$ \nu = {L\nu_0\over{L-1}} $$ and $$ K =(L-1) {Q({L\nu_0\over{L-1}}) \over{\nu_0}},$$ 
with again $$\nu_0 = \| u_0\|.$$ {\color{black} That is,
the forward operator $K_n$, given by (\ref{generalKformula}), is a bounded multilinear operator from $[L^\infty(\Omega)]^{(L-1)n}$ to $C(\partial{\Omega}\times\partial\Omega)$  and \begin{equation}\label{generalKnbound} | K_n |_\infty  \leq \nu( K \mu)^n ,\end{equation}
 where  \begin{equation}\label{mudef}  \mu = k^2\sup_{x\in \Omega} \int_\Omega | G(x,y) | dy , \end{equation} 
  $$\nu={L\over{L-1}} \| u_0\|_{C(\overline{\Omega}\times\partial\Omega)},$$ and $$K= (L-1){Q({L\over{L-1}} \| u_0\|) \over{  \| u_0\|}}.$$
 The forward series will converge if $K\mu <1 $,  and we clearly have $K\leq C \| u_0 \|^s $ with $s \geq 1$ for $\| u_0\|$ small, since 
we assumed that the polynomial $Q$ has degree greater than or equal to $2$.  In a similar manner, the inverse series will have radius 
$$r=\frac{1}{2K\mu} \left[\sqrt{16 C^2+1}-4 C \right],$$
where 
$C = \max\{2,\|\mathcal{K}_1\|\nu K\mu\}$.  Note that if we scale the data, the situation is similar to the cubic case, since $K\nu$ scales as $Q(\| u_0\|)$, while $\mathcal{K}_1$ scales as $ 1/ Q(\| u_0\|)$, so that $r\rightarrow\infty$ as $\| u_0\| \rightarrow 0$, due to the presence of $K$ in the denominator in the expression for $r$. }

\begin{remark} If the nonlinearity is polynomial in both $u$ and its complex conjugate $\overline{u}$, this analysis carries over, with the forward operators generalized to have conjugates appropriately placed in (\ref{generalKformula}). If there is at most one term per degree, the constants $\nu$ and $K$ will remain the same as in the real case presented here. If there is more than one term for some degree, the constants will need to be modified slightly; however, they will scale similarly with $\nu_0$. 
\end{remark}

\section{Numerical Reconstructions}
In this section, we present a few numerical simulations to demonstrate convergence of the IBS for high contrast. We note that the restriction to the real case and to two dimensions is for  simplicity and is not fundamental.  To generate synthetic data, we solve the nonlinear PDE
\begin{align}
\label{pde}
\Delta u + k^2 u + k^2\beta(x) u^3 &= 0 \quad \text{ in } \quad \Omega \ , \\
\frac{\partial u}{\partial \nu } &= g \quad  \text{ on } \quad \partial\Omega \ ,
\end{align}
and the background PDE
\begin{align}
\label{pde}
\Delta u_0 + k^2 u_0 &= 0 \quad \text{ in } \quad \Omega \ , \\
\frac{\partial u}{\partial \nu } &= g \quad  \text{ on } \quad \partial\Omega \ ,
\end{align}
by using a Galerkin finite element method as implemented in the FEniCS library in Python. The domain $\Omega$ is the unit disk, and we obtain the finite element mesh automatically in FEniCS. The boundary source $g$ is taken to be $g(x)=g_0\delta(x-y)$, where $y\in\partial\Omega$ and $g_0$ is the strength of the source. The delta function is approximated by a Gaussian for numerical computations, and we will force small $\| u_0\|$ by decreasing $g_0$. 
The forward operators $K_n$ are constructed according to the formulas (\ref{Kformula}), and the operator $B$, defined by the corresponding integral operator $b$ given in (\ref{beq}), is evaluated by solving the a background PDE source problem.  We use a different mesh to compute the forward operators from the one used to generate the boundary data. Note that these background problems are linear, and only the right-hand side of the PDE changes for each evaluation of  $b$.  The inverse Born series is implemented according to \eqref{inv_operators}--\eqref{inv_operators_again}. The solution to the linearized inverse problem is given in terms of the operator $\mathcal K_1$, which is constructed from a regularized pseudoinverse of the forward operator $K_1$. {\color{black} In our calculations we used the built in numpy \texttt{pinv} function,  which uses SVD and cuts the singular values below the ratio \texttt{rcond}, which we found we needed to choose between $\texttt{rcond}=10^{-6}$ and  $\texttt{rcond}=10^{-4}$.} In all of the following figures, we employ $16$ sources and $32$ detectors, and two frequency values $k=1,2$, each for $8$ of the sources. Only one value of $g_0$ is used per experiment in order to emphasize the effects of scaling. 

In Figure \ref{fig:highthreegaussians} we show an example of the reconstructions of three Gaussians of very high contrast, {\color{black} in this case over 20:1. } The sources were implemented with small $g_0=0.01$, and the series converged rapidly, with the first term already close to the projection $\mathcal{K}_1 K_1 \beta$.  One would expect that $\mathcal{K}_1 K_1\beta$ is the best one could hope for given the regularization. The cross section reveals the rapid convergence. 

In our next experiment, we take $\beta$ to be a disk of high contrast (5:1) with a jump against the background. We see the reconstructions in Figure \ref{fig:disk}, where we take the source scaling to be only moderately small, with $g_0=0.1$.  Here we see the higher order terms in the series improving the reconstruction. The shape of the disk is recovered even better than  $\mathcal{K}_1 K_1 \beta$. 

For the third and final experiment, we present a Gaussian and the disk side by side as seen in Figure \ref{fig:diskandgaussian}, with a moderate scaling $g_0=0.1$.  Again the higher order terms in the series improve the reconstruction, even differentiating the two inhomogeneities better than $\mathcal{K}_1 K_1 \beta$.

\begin{figure}[t]
    \centering
    \begin{subfigure}[]{1\textwidth}
        \centering
        \includegraphics[height=2.5in]{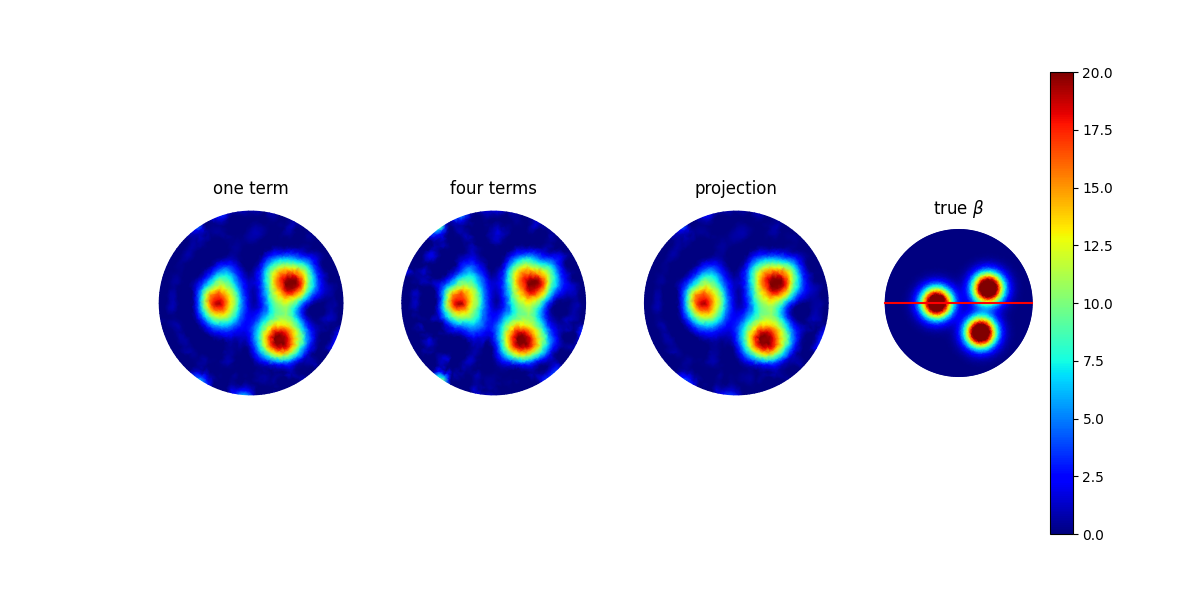}
        \caption{Reconstruction of  high contrast $\beta $.}
        \label{fig:sub1}
    \end{subfigure}
    \begin{subfigure}[]{1\textwidth}
        \centering
        \includegraphics[height=2.5in]{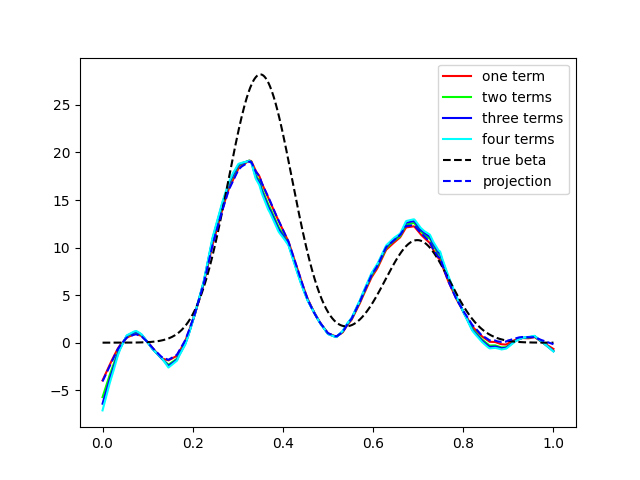}
        \caption{Cross section of reconstruction of high contrast $\beta$.}
        \label{fig:sub2}
    \end{subfigure}
    \caption{Reconstruction of a high contrast $\beta$. Sources were scaled down with $g_0=0.01$ to ensure convergence of the IBS. The projection of the true $\beta$ onto the regularization space, $\mathcal{K}_1 K_1 \beta$, indicates an expected best case scenario.}
    \label{fig:highthreegaussians}
\end{figure}

\begin{figure}[t]
    \centering
    \begin{subfigure}[]{1\textwidth}
        \centering
        \includegraphics[height=2.5in]{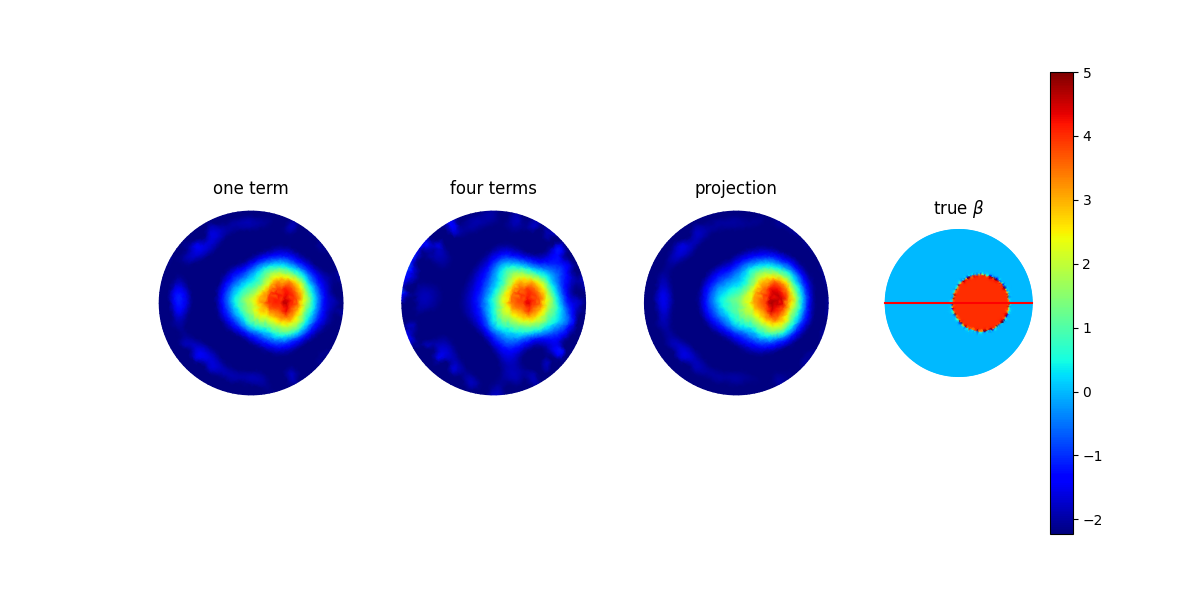}
        \caption{Reconstruction of disk shaped inclusion $\beta $.}
        \label{fig:sub1}
    \end{subfigure}
    \begin{subfigure}[]{1\textwidth}
        \centering
        \includegraphics[height=2.5in]{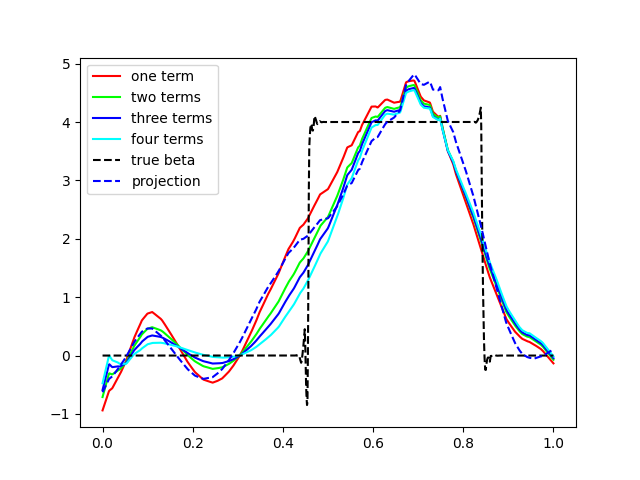}
        \caption{Cross section of reconstruction of disk inclusion $\beta$.}
        \label{fig:sub2}
    \end{subfigure}
    \caption{Reconstruction of $\beta$ with a discontinuous disk shaped inclusion. Sources were scaled moderately with $g_0=.1$. The series captures the shape better than the projection $\mathcal{K}_1 K_1 \beta$.}
    \label{fig:disk} 
    \end{figure}

\begin{figure}[t]
    \centering
    \begin{subfigure}[]{1\textwidth}
        \centering
        \includegraphics[height=2.5in]{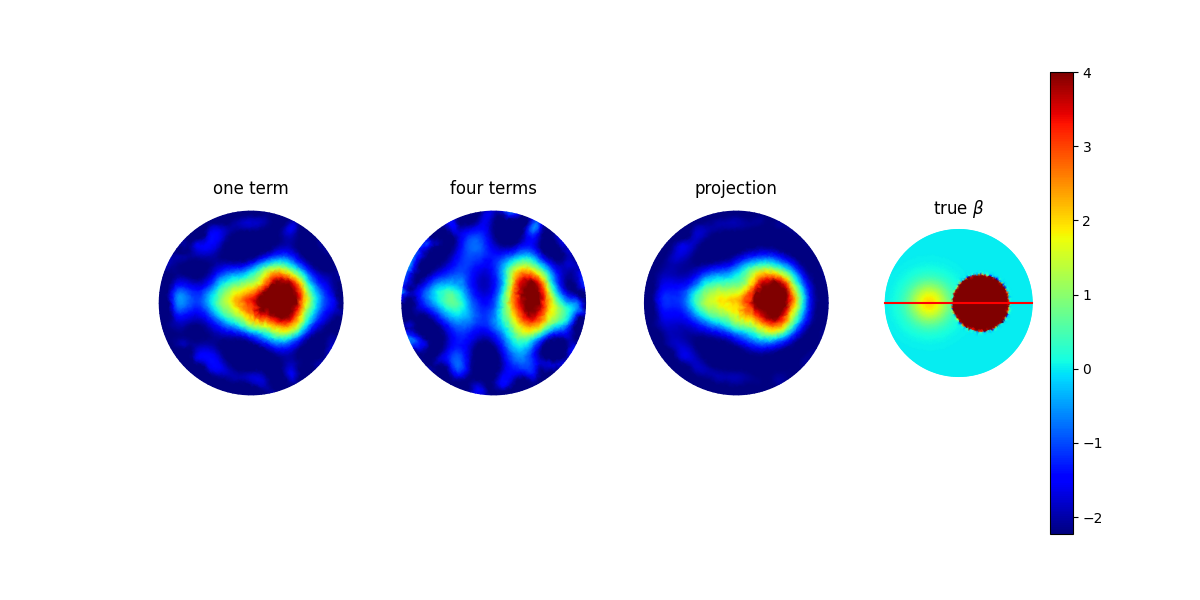}
        \caption{Reconstruction of disk shaped inclusion and a Gaussian}
        \label{fig:sub1}
    \end{subfigure}
    \begin{subfigure}[]{1\textwidth}
        \centering
        \includegraphics[height=2.5in]{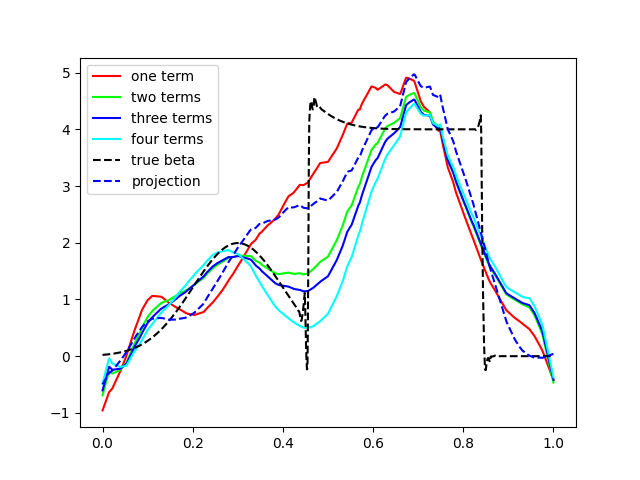}
        \caption{Cross section of reconstruction of disk inclusion and a Gaussian}
        \label{fig:sub2}
\end{subfigure}
    \caption{Reconstruction of $\beta$ with a discontinuous disk shaped inclusion and a Gaussian. Sources were scaled moderately with $g_0=.1$. The series differentiates the two inhomogeneities better than the projection $\mathcal{K}_1 K_1 \beta$.}
    \label{fig:diskandgaussian}
\end{figure}

\section{Discussion}
We have investigated the inverse Born series for scalar waves with polynomial nonlinearities, where the coefficient of the linear term is constant. We have analyzed the convergence of the IBS, and have found that given any contrast and regularization, the IBS will converge if the data is taken to be sufficiently small.  {\color{black}  Numerical simulations demonstrate that even for very high contrast, for sufficiently small scaling, the error in the reconstructions is dominated by the loss of information due to regularization, and the reconstruction is quite close to the projection $\mathcal{K}_1 K_1 \beta$.  However, in some cases, when using a more moderate scaling, the reconstructions appear to be better than the projection. 
The explanation of this finding will require further study. Furthermore, the reconstruction results could potentially be improved by using better regularization techniques. }
 
Our results suggest that high contrast nonlinear inhomogeneities {\color{black} of the type (\ref{generalpdeequation}) } will generally be less difficult to reconstruct than linear inhomogeneities. In this light, we suspect that Newton type methods will converge rapidly for small enough data, and for a large class of problems, the inverse Born approximation (the first term in the inverse series) will itself be quite close to $\mathcal{K}_1 K_1 \beta$ for small data.

\section{Acknowledgments}  
 S. Moskow was supported by the NSF grants DMS-2008441 and DMS-2308200. J. Schotland was supported by the NSF grant DMS-1912821 and the AFOSR grant FA9550-19-1-0320.

\end{document}